\theoremstyle{plain}% Theorem-like structures provided by amsthm.sty
\newtheorem{theorem}{Theorem}[section]
\theoremstyle{definition}
\theoremstyle{remark}
\begin{document}

\title{An Unbiased Variance Estimator with Denominator $N$}

\author{
\name{Dai Akita\textsuperscript{a}\thanks{CONTACT Dai Akita Email: d.akita@ne.t.u-tokyo.ac.jp}}
\affil{\textsuperscript{a}Graduate School of Information Science and Technology, The University of Tokyo, Japan}
}

\maketitle

\begin{abstract}
Standard practice obtains an unbiased variance estimator by dividing by $N-1$ rather than $N$. Yet if only half the data are used to compute the mean, dividing by $N$ can still yield an unbiased estimator. We show that an alternative mean estimator $\hat{X} = \sum c_n X_n$ can produce such an unbiased variance estimator with denominator $N$. These \emph{average-adjusted unbiased variance} (AAUV) permit infinitely many unbiased forms, though each has larger variance than the usual sample variance. Moreover, permuting and symmetrizing any AAUV recovers the classical formula with denominator $N-1$. We further demonstrate a continuum of unbiased variances by interpolating between the standard and AAUV-based means. Extending this average-adjusting method to higher-order moments remains a topic for future work.
\end{abstract}

\section{Introduction}
\label{sec:intro}

Let $X_1, X_2, \ldots, X_N$ be independent and identically distributed (i.i.d.) observations from a population with unknown mean $\mu$ and variance $\sigma^2$. The naive sample variance is defined by
\begin{equation}
\label{eq:biased-var}
    \bar{\sigma}^2 = \frac{1}{N}\sum_{n=1}^N (X_n - \bar{X})^2,
\end{equation}
where
\begin{equation}
\label{eq:sample-mean}
    \bar{X} = \frac{1}{N}\sum_{n=1}^N X_n.
\end{equation}
As explained in standard statistical textbooks, $\bar{\sigma}^2$ is a biased estimator of $\sigma^2$, specifically
\begin{equation}
    \mathbb{E}[\bar{\sigma}^2] = \frac{N-1}{N}\,\sigma^2.
\end{equation}
To correct this bias, we replace $N$ with $N-1$ in the denominator of Equation \eqref{eq:biased-var}:
\begin{equation}
\label{eq:usual-unbiased-var}
    s^2 = \frac{1}{N-1}\sum_{n=1}^N (X_n - \bar{X})^2,
\end{equation}
so that $s^2$ becomes an unbiased estimator of $\sigma^2$, i.e.\ $\mathbb{E}[s^2] = \sigma^2$.

Given that $N$ is even, consider another mean estimator that uses only half of the observations:
\begin{equation}
\label{eq:half-mean}
    \bar{X}_2 = \frac{2}{N}\sum_{n=1}^{N/2}X_n.
\end{equation}
Then, the sum of squared deviations from $\bar{X}_2$, divided by $N$,
\begin{equation}
\label{eq:half-var}
    \bar{\sigma}_2^2 = \frac{1}{N}\sum_{n=1}^N (X_n - \bar{X}_2)^2
\end{equation}
is, in fact, an unbiased estimator of the variance:
\begin{align}
    & \mathbb{E}\left[ \bar{\sigma}_2^2 \right]
    = \mathbb{E} \left[ \frac{1}{N} \sum_{n=1}^N (X_n - \bar{X}_2)^2\right]
    = \mathbb{E} \left[ \frac{1}{N} \sum_{n=1}^N \left( (X_n - \mu) - (\bar{X}_2 - \mu) \right)^2 \right] \notag \\
    & = \frac{1}{N} \sum_{n=1}^N \mathbb{E} \left[ (X_n - \mu)^2 \right]
    - \frac{2}{N} \sum_{n=1}^N \mathbb{E} \left[ (X_n - \mu)(\bar{X}_2 - \mu) \right]
    + \mathbb{E} \left[ (\bar{X}_2 - \mu)^2 \right] \notag \\
    & = \sigma^2 
    - \frac{2}{N} \sum_{n=1}^N \mathbb{E} \left[ (X_n - \mu)\left(\frac{2}{N} \left( \sum_{m=1}^{N/2} X_m \right)- \mu\right) \right]
    + \mathbb{E} \left[ \left( \frac{2}{N} \left(\sum_{n=1}^{N/2} X_n \right) - \mu \right)^2 \right] \notag \\
    & = \sigma^2 
    - \frac{4}{N^2} \sum_{n=1}^N \mathbb{E} \left[ (X_n - \mu) \sum_{m=1}^{N/2} (X_m - \mu) \right]
    + \frac{4}{N^2} \mathbb{E} \left[ \left(\sum_{n=1}^{N/2} (X_n - \mu) \right)^2 \right] \notag \\
    & = \sigma^2 
    - \frac{4}{N^2} \sum_{n=1}^N \sum_{m=1}^{N/2} \mathbb{E} \left[ (X_n - \mu) (X_m - \mu) \right]
    + \frac{4}{N^2} \sum_{n=1}^{N/2} \sum_{m=1}^{N/2} \mathbb{E} \left[ (X_n - \mu) (X_m - \mu) \right] \notag \\
    & = \sigma^2 
    - \frac{4}{N^2} \sum_{n=1}^{N/2} \mathbb{E} \left[ (X_n - \mu)^2 \right]
    + \frac{4}{N^2} \sum_{n=1}^{N/2} \mathbb{E} \left[ (X_n - \mu)^2 \right]
    = \sigma^2
\end{align}
Hence, $\bar{\sigma}_2^2$ is also an unbiased estimator of $\sigma^2$ despite the fact that the denominator is $N$. One may interpret this as adjusting the mean estimator rather than the denominator. Naturally, this raises the question of whether there exist other mean estimators that yield an unbiased variance estimator when the sum of squared deviations is divided by $N$. We call such estimators average-adjusted unbiased variance (AAUV) and discuss their characteristics in this paper.

\section{Related Work}
\label{sec:related_work}

Although the naive estimator $\bar{\sigma}^2$ with denominator $N$ is a biased estimator, it can be preferable in certain practical scenarios. For instance, when mean squared error is prioritized over unbiasedness \cite{imstat2015kids}, or when adopting maximum likelihood estimation \cite{bishop2006pattern} for normally distributed data. From an educational or intuitive standpoint, some even advocate the use of the $N$-denominator estimator for teaching introductory statistics \cite{rumsey2009lets}. Consequently, it is not always the case that one should favor the unbiased estimator over the biased version. Moreover, \cite{reichel2025} proposes an alternative measure of dispersion that does not rely on explicitly estimating the mean, thereby moving beyond the conventional $N$ versus $N-1$ debate. Such discussions highlight the multifaceted considerations in choosing a variance estimator, motivating further exploration of approaches like AAUV.

A wide variety of research has been conducted on the construction of unbiased estimators. For example, unbiased estimators of higher-order central moments can be derived from $h$-statistics \cite{dwyer1937moments,gerlovina2019computer}, while \cite{ruiz2013optimal} proposes an alternative approach that accomodates various sampling designs. In finite-population settings, the use of auxiliary information can yield unbiased estimators of variance \cite{Isaki01031983,singh2013new,Zaman18042023,ahmad2023enhanced}. \cite{voinov2012unbiased} provides a comprehensive overview of standard methods, such as scale adjustment (e.g., $s^2$), utilization of the Rao--Blackwell--Kolmogorov theorem, and solving integral equations. However, none of these works appear to investigate the approach discussed in this paper, namely ``adjusting the mean estimator itself'' so that the denominator remains $N$ while still achieving unbiasedness. Our notion of AAUV addresses precisely this gap by exploiting a suitable linear combination of observations to preserve unbiasedness with a denominator of $N$.

\section{Average-Adjusted Unbiased Variance}

We now generalize the half-sample approach to allow for any weighted mean estimator
\begin{equation}
\label{eq:weighted-mean}
    \hat{X} = \sum_{n=1}^N c_n \, X_n
\end{equation}
and investigate conditions under which
\begin{equation}
\label{eq:aauv}
    \hat{s}^2 = \frac{1}{N}\sum_{n=1}^N (X_n - \hat{X})^2
\end{equation}
is unbiased. We call such $\hat{s}^2$ an \emph{average-adjusted unbiased variance (AAUV)}. Equation \eqref{eq:aauv} can be expanded as follows: 
\begin{align}
    & \hat{s}^2 = \frac{1}{N} \sum_{n=1}^N \left( X_n - \hat{X} \right)^2
    = \frac{1}{N} \sum_{n=1}^N \left( (X_n - \mu) - \sum_{m=1}^N c_m (X_m - \mu) - \left( 1 - \sum_{m=1}^N c_m \right) \mu \right)^2 \notag \\
    & = \frac{1}{N} \sum_{n=1}^N \left( (1 - c_n) (X_n - \mu) - \sum_{m \neq n} c_m (X_m - \mu) - \left( 1 - \sum_{m=1}^N c_m \right) \mu \right)^2.
    \label{eq:hatmu2}
\end{align}
From the fact that
\begin{equation}
    \mathbb{E}\left[ (X_n - \mu) (X_m - \mu) \right] =
    \begin{cases}
      \sigma^2, & \mathrm{if} \quad n=m, \\
      0, & \mathrm{otherwise},
    \end{cases}
\end{equation}
we can decompose the expectation of $\hat{s}^2$ as
\begin{align}
    & \mathbb{E}\bigl[ \hat{s}^2 \bigr] 
    = \frac{1}{N} \sum_{n=1}^N \bigl( (1 - c_n)^2 \sigma^2 + \sum_{m \neq n} c_m^2 \sigma^2 + \bigl( 1 - \sum_{m=1}^N c_m \bigr)^2 \mu^2 \bigr) \notag \\
    & = \frac{1}{N} \sum_{n=1}^N \Bigl( (1 - 2 c_n) \sigma^2 + \sum_{m=1}^N c_m^2 \sigma^2 \Bigr) + \Bigl( 1 - \sum_{m=1}^N c_m \Bigr)^2 \mu^2 \notag \\
    & = \Bigl( 1 - \frac{2}{N} \sum_{n=1}^N c_n + \sum_{n=1}^N c_n^2 \Bigr) \sigma^2 
    + \Bigl( 1 - \sum_{n=1}^N c_n \Bigr)^2 \mu^2.
\end{align}
Hence, if the coefficients $c_1, \ldots, c_N$ satisfy
\begin{equation}
    \left\{
    \begin{aligned}
    & 1 - \sum_{n=1}^N c_n = 0, \\
    & 1 - \frac{2}{N} \sum_{n=1}^N c_n + \sum_{n=1}^N c_n^2 = 1,
    \end{aligned}
    \right.
\end{equation}
then $\hat{s}^2$ is an unbiased estimator of $\sigma^2$. Rewriting these yields
\begin{equation}
    \label{eq:k2}
    \left\{
        \begin{aligned}
    & \sum_{n=1}^N c_n = 1, \\
    & \sum_{n=1}^N c_n^2 = \frac{2}{N}.
        \end{aligned}
    \right.
\end{equation}
If the coefficients satisfy Equation \eqref{eq:k2}, then $\hat{X}$ is also an unbiased estimator of $\mu$.

We can confirm that $\bar{X}_2$ in Equation \eqref{eq:half-var} corresponds to the case
\begin{equation}
    \left\{
        \begin{aligned}
    & c_1 = \cdots = c_{N/2} = \frac{2}{N}, \\
    & c_{N/2+1} = \cdots = c_N = 0,
        \end{aligned}
    \right.
\end{equation}
whose coefficients indeed satisfy the condition \eqref{eq:k2}. Other choices of coefficients can also generate an AAUV. For example, let $M$ be an integer with $1 \le M < N$, and define
\begin{equation}
   \label{eq:cM}
    \left\{
        \begin{aligned}
    & c_1 = \cdots = c_M = \frac{M + \sqrt{M(N-M)}}{NM}, \\
    & c_{M+1} = \cdots = c_N = \frac{N-M - \sqrt{M(N-M)}}{N(N-M)}.
        \end{aligned}
    \right.
\end{equation}
These satisfy \eqref{eq:k2} as well. In particular, if $M = N/2$, the coefficients reproduce $\bar{X}_2$ and $\bar{\sigma}_2^2$.

Note that a coefficient $c_n$ can be negative, but its range is constrained. From the Cauchy--Schwarz inequality
\begin{equation}
    \left( \sum_i a_i b_i \right)^2 \le \left( \sum_i a_i^2 \right) \left( \sum_i b_i^2 \right),
\end{equation}
we obtain
\begin{equation}
    \left( 1 - c_N \right)^2 = \left( \sum_{n=1}^{N-1} c_n \right)^2 \le (N-1) \sum_{n=1}^{N-1} c_n^2 = (N-1)\left( \frac{2}{N} - c_N^2 \right).
\end{equation}
Equality holds if $c_1 = \cdots = c_{N-1}$. Hence, $c_N$ must lie within
\begin{equation}
    \frac{1-\sqrt{N-1}}{N} \;\le\; c_N \;\le\; \frac{1+\sqrt{N-1}}{N}.
\end{equation}
Another characteristic follows from considering the sum of pairwise products of the coefficients:
\begin{equation}
    \sum_{n=1}^N \sum_{m \neq n} c_n c_m = \left( \sum_{n=1}^N c_n \right)^2 \;-\; \sum_{n=1}^N c_n^2 = \frac{N-2}{N}.
\end{equation}

While these findings demonstrate that there exist numerous ways to construct an average-adjusted unbiased variance estimator, their practical appeal remains limited. In general, among unbiased quadratic-form estimators of the variance, the usual unbiased variance exhibits the smallest variance \cite{hsu1938best}. Because AAUVs also boil down to quadratic forms in the sample, they cannot achieve a lower variance than the standard unbiased variance. From a computational standpoint, the $\bar{X}_2$ example illustrates that using fewer observations to compute the mean only halves the number of additions for that portion of the calculation, resulting in negligible overall speed gains. Moreover, the fact that AAUVs are inherently non-symmetric functions of the sample implies that they cannot be minimum-variance unbiased estimators, since such optimality requires symmetry \cite{rao1973linear}. Indeed, permuting the sample values can yield as many as $N!$ distinct estimates from a single AAUV formula, yet their average coincides exactly with the usual unbiased variance.

\begin{theorem}
\label{th:mvue}
Let $\hat{s}^2(X_1, ..., X_N)$ be an average-adjusted unbiased variance computed from the sample $X_1, ..., X_N$. Suppose $i_{1}, ..., i_{N}$ is a permutation of the indices $1, ..., N$. Then the average
\begin{equation}
    Q = \frac{1}{N!} \sum_{i_{1}, ..., i_{N}} \hat{s}^2\left(X_{i_1}, ..., X_{i_N}\right)
\end{equation}
taken over all such permutations is equal to the usual unbiased variance $s^2$.
\end{theorem}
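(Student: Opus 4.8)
The plan is to recast $\hat{s}^2$ as an explicit quadratic form in the observations and then exploit the linearity of the permutation average. Writing $Y_n = X_n - \hat{X} = \sum_{m=1}^N (\delta_{nm} - c_m) X_m$, a direct expansion gives
\begin{equation}
    \hat{s}^2 = \frac{1}{N}\sum_{m=1}^N\sum_{l=1}^N \Bigl( \delta_{ml} - c_m - c_l + N c_m c_l \Bigr) X_m X_l,
\end{equation}
so that $\hat{s}^2 = \sum_{m,l} A_{ml} X_m X_l$ with $A_{ml} = \tfrac{1}{N}(\delta_{ml} - c_m - c_l + N c_m c_l)$. I would verify this one-line computation first, since everything else rests on it.

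Next I would observe that permuting the sample indices in $\hat{s}^2$ is equivalent to permuting which coefficient $c_n$ multiplies which observation. Because the average $Q$ is linear in the coefficient array, $Q$ is again a quadratic form $\sum_{m,l} \bar{A}_{ml} X_m X_l$, where $\bar{A}_{ml}$ is the average of $A_{ml}$ over all $N!$ assignments. By the symmetry of this averaging, $\bar{A}_{ml}$ depends only on whether $m = l$ or $m \neq l$; call these values $\alpha$ (diagonal) and $\beta$ (off-diagonal). Thus $Q = \alpha \sum_n X_n^2 + \beta \sum_{n \neq m} X_n X_m$, and the theorem reduces to computing the two scalars $\alpha$ and $\beta$.

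For the diagonal term, each $c_m$ and $c_m^2$ is replaced by its sample average $\bar c = \tfrac1N \sum_n c_n = \tfrac1N$ and $\overline{c^2} = \tfrac1N \sum_n c_n^2 = \tfrac{2}{N^2}$, using conditions \eqref{eq:k2}; this yields $\alpha = \tfrac1N(1 - 2\bar c + N\,\overline{c^2}) = \tfrac1N$. For the off-diagonal term the product $c_m c_l$ with $m \neq l$ must be averaged over ordered pairs of \emph{distinct} coefficients, giving $\tfrac{1}{N(N-1)}\sum_{a \neq b} c_a c_b = \tfrac{N-2}{N^2(N-1)}$ by the pairwise-product identity derived just above the theorem; combined with the single-coefficient averages this gives $\beta = \tfrac1N\bigl(-2\bar c + N \cdot \tfrac{N-2}{N^2(N-1)}\bigr) = -\tfrac{1}{N(N-1)}$.

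Finally I would compare with the quadratic-form expansion of the usual estimator. Since $s^2 = \tfrac1N \sum_n X_n^2 - \tfrac{1}{N(N-1)} \sum_{n \neq m} X_n X_m$, obtained by expanding $s^2 = \tfrac{1}{N-1}(\sum_n X_n^2 - N \bar X^2)$, its diagonal and off-diagonal coefficients are exactly $\tfrac1N$ and $-\tfrac{1}{N(N-1)}$, matching $\alpha$ and $\beta$. Hence $Q = s^2$. The main obstacle I anticipate is the combinatorial bookkeeping in the off-diagonal average: one must recognize that two distinct positions always receive two \emph{distinct} coefficients, so the relevant average runs over the $N(N-1)$ ordered distinct pairs rather than all $N^2$ pairs — overlooking this distinction is the natural way to get $\beta$ wrong.
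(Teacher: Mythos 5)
Your proof is correct and takes essentially the same approach as the paper's: expand $\hat{s}^2$ as a quadratic form, note that averaging over permutations reduces its coefficient matrix to one common diagonal value and one common off-diagonal value, evaluate these two scalars using conditions \eqref{eq:k2} together with the pairwise-product identity $\sum_{n}\sum_{m\neq n}c_n c_m = (N-2)/N$, and match the result against the quadratic form of $s^2$. The only difference is presentational: you work directly with the raw observations $X_n$, whereas the paper carries out the identical computation in the centered variables $X_n - \mu$.
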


\begin{proof}
We can write $\hat{s}^2$ as
\begin{align}
    & \hat{s}^2 = \frac{1}{N} \sum_{n=1}^N \left((1 - c_n) (X_n - \mu) - \sum_{m \neq n} c_m (X_m - \mu)\right)^2 \notag \\
    &= \frac{1}{N} \sum_{n=1}^N \left(
       (1 - c_n)^2 (X_n - \mu)^2
       - 2 \sum_{m \neq n} (1 - c_n)c_m (X_n - \mu)(X_m - \mu) \right. \notag \\
    &\qquad \qquad \left. + \sum_{m \neq n} c_m^2 (X_m - \mu)^2
       + \sum_{m \neq n} \sum_{m' \neq n, m} c_m c_{m'} (X_m - \mu)(X_{m'} - \mu)
    \right) \notag \\
    &= \frac{1}{N} \sum_{n=1}^N (1 - 2 c_n + N c_n^2)(X_n - \mu)^2
    - \frac{2}{N} \sum_{n=1}^N \sum_{m \neq n} (c_m - c_n c_m)(X_n - \mu)(X_m - \mu) \notag \\
    &\qquad \qquad + \frac{N-2}{N} \sum_{n=1}^N \sum_{m \neq n} c_n c_m (X_n - \mu)(X_m - \mu). \label{eq:aauv-expanded}
\end{align}
$Q$ is obtained as the average of the above expression over all permutations of the indices:
\begin{align}
    &Q = \frac{1}{N^2} \sum_{k=1}^N (1 - 2 c_k + N c_k^2) \sum_{n=1}^N (X_n - \mu)^2 \notag \\
    &\qquad \qquad - \frac{2}{N^2(N-1)} \sum_{k=1}^N \sum_{l \neq k} (c_k - c_l c_k) \sum_{n=1}^N \sum_{m \neq n} (X_n - \mu)(X_m - \mu) \notag \\
    &\qquad \qquad + \frac{N-2}{N^2(N-1)} \sum_{k=1}^N \sum_{l \neq k} c_n c_m \sum_{n=1}^N \sum_{m \neq n} (X_n - \mu)(X_m - \mu) \notag \\
    &= \frac{1}{N} \sum_{n=1}^N (X_n - \mu)^2
    - \frac{2}{N^2(N-1)}\left(N-1 - \frac{N-2}{N}\right)\sum_{n=1}^N \sum_{m \neq n} (X_n - \mu)(X_m - \mu) \notag \\
    &\qquad \qquad + \frac{N-2}{N^2(N-1)} \frac{N-2}{N} \sum_{n=1}^N \sum_{m \neq n} (X_n - \mu)(X_m - \mu) \notag \\
    &= \frac{1}{N} \sum_{n=1}^N (X_n - \mu)^2 
    + \frac{(N-2)^2 - 2N^2 + 2N + 2N - 4}{N^3(N-1)} \sum_{n=1}^N \sum_{m \neq n} (X_n - \mu)(X_m - \mu) \notag \\
    &= \frac{1}{N} \sum_{n=1}^N (X_n - \mu)^2 
    - \frac{1}{N(N-1)} \sum_{n=1}^N \sum_{m \neq n} (X_n - \mu)(X_m - \mu). \label{eq:M}
\end{align}
On the other hand, using Equation \eqref{eq:aauv-expanded} gives another form of the usual unbiased variance:
\begin{align}
    &s^2 = \frac{1}{N-1} \sum_{n=1}^N \left(1 - \frac{2}{N} + \frac{1}{N}\right)(X_n - \mu)^2 - \frac{2}{N-1}\sum_{n=1}^N \sum_{m \neq n} \frac{N-1}{N^2}(X_n - \mu)(X_m - \mu) \notag \\
    &\qquad \qquad + \frac{N-2}{N-1} \sum_{n=1}^N \sum_{m \neq n} \frac{1}{N^2} (X_n - \mu)(X_m - \mu) \notag \\
    &= \frac{1}{N}\sum_{n=1}^N (X_n - \mu)^2 
    - \frac{1}{N(N-1)} \sum_{n=1}^N \sum_{m \neq n} (X_n - \mu)(X_m - \mu),
\end{align}
which indeed equals $Q$.
\end{proof}

\section{Between the Standard Unbiased Variance and the Average-Adjusted Unbiased Variance}

Although the two estimators differ in how the mean is calculated and how the overall factor is chosen, we now consider whether there exists an intermediate unbiased estimator between them. First, define
\begin{equation}
    \tilde{X}_\lambda = \lambda \hat{X} + (1-\lambda) \bar{X}.
\end{equation}
Clearly, $\tilde{X}_0 = \bar{X}$ and $\tilde{X}_1 = \hat{X}$. Consider the sum of squared deviations from $\tilde{X}_\lambda$,
\begin{align}
    & S(\lambda) = \sum_{n=1}^N (X_n - \tilde{X}_\lambda)^2
    = \sum_{n=1}^N \left(X_n - (\lambda \hat{X} + (1-\lambda) \bar{X}) \right)^2 \notag \\
    & = \sum_{n=1}^N \left( X_n - \hat{X} - (1-\lambda) (\bar{X} - \hat{X}) \right)^2 \notag \\
    & = \sum_{n=1}^N (X_n - \hat{X})^2 - 2 (1-\lambda) (\bar{X} - \hat{X})\sum_{n=1}^N (X_n - \hat{X}) + (1-\lambda)^2 N (\hat{X} - \bar{X})^2 \notag \\
    & = \sum_{n=1}^N (X_n - \hat{X})^2 - 2 (1-\lambda) \frac{1}{N} \left( \sum_{n=1}^N (X_n - \hat{X}) \right)\left( \sum_{n=1}^N (X_n - \hat{X}) \right) \notag \\
    &\qquad \qquad + (1-\lambda)^2 N \frac{1}{N^2} \left( \sum_{n=1}^N (X_n - \hat{X}) \right)^2 \notag \\
    & = \sum_{n=1}^N (X_n - \hat{X})^2 - \frac{1-\lambda^2}{N} \left( \sum_{n=1}^N (X_n - \hat{X}) \right)^2.
\end{align}
Note that
\begin{align}
    & \mathbb{E} \left[ \left( \sum_{n=1}^N (X_n - \hat{X}) \right)^2 \right]
    = \mathbb{E} \left[ \left( \sum_{n=1}^N \left( (1-c_n) (X_n - \mu) - \sum_{m\neq n} c_m (X_m - \mu) \right) \right)^2 \right] \notag \\
    & = \sum_{n=1}^N \left( (1-c_n)^2 \mathbb{E}[(X_n - \mu)^2] + \sum_{m\neq n} c_m^2 \mathbb{E}[(X_m - \mu)^2] \right) 
    = \sigma^2 \sum_{n=1}^N \left( 1-2c_n + \sum_{m=1}^N c_m^2 \right) \notag \\
    & = \sigma^2 (N - 2 + 2) = N \sigma^2,
\end{align}
which implies
\begin{equation}
    \mathbb{E} \left[ S(\lambda) \right] = N \sigma^2  - (1-\lambda^2) \sigma^2 = (N - 1 + \lambda^2) \sigma^2.
\end{equation}
Hence,
\begin{equation}
    s_\lambda^2 = \frac{1}{N-1+\lambda^2} \sum_{n=1}^N (X_n - \tilde{X}_\lambda)^2
\end{equation}
satisfies
\begin{equation}
    \mathbb{E} [s_\lambda^2] = \sigma^2,
\end{equation}
so it is an unbiased estimator of the variance.

Thus, we derive an unbiased variance estimator that is a combination of the average-adjusted unbiased variance and the standard unbiased variance. For any $K \ge N-1$, we can construct an unbiased variance estimator with a denominator of $K$, using $\lambda = \sqrt{K-N+1}$ and any coefficients for AAUV. If we use $\hat{X}$ with coefficients from Equation \eqref{eq:cM}, then choosing $\lambda = \sqrt{(N-M)/M}$ gives
\begin{equation}
   \tilde{X}_\lambda = \frac{1}{M} \sum_{n=1}^M X_n
\end{equation}
and
\begin{equation}
   s_\lambda^2 = \frac{1}{N-1+(N-M)/M} \sum_{n=1}^N ( X_n - \tilde{X}_\lambda )^2,
\end{equation}
where the mean estimator uses only $M$ data points. As with AAUV, symmetrization recovers the standard unbiased variance.

\begin{theorem}
Let $i_1, ..., i_N$ be a permutation of the indices $1, ..., N$. Then the average
\begin{equation}
    Q(\lambda) = \frac{1}{N!} \sum_{i_1, \dots, i_N} s_\lambda^2(X_{i_1}, \dots, X_{i_N})
\end{equation}
over all permutations is equal to the standard unbiased variance $s^2$, regardless of the value of $\lambda$.
\end{theorem}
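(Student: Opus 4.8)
The plan is to avoid symmetrizing $s_\lambda^2$ directly and instead exploit the linearity of the permutation-averaging operator together with the already-established Theorem \ref{th:mvue}. Write $\langle\,\cdot\,\rangle$ for the average over all $N!$ index permutations. Because this operator is linear, it suffices to split the numerator of $s_\lambda^2$ into pieces whose symmetrizations are either known or easy to compute, and then reassemble.

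First I would reuse the expansion of $S(\lambda)=\sum_n (X_n-\tilde X_\lambda)^2$ already derived in the excerpt,
\[
S(\lambda)=\sum_{n=1}^N (X_n-\hat X)^2-\frac{1-\lambda^2}{N}\Big(\sum_{n=1}^N (X_n-\hat X)\Big)^2,
\]
and apply the identity $\sum_n (X_n-\hat X)=N(\bar X-\hat X)$ to obtain the compact form
\[
S(\lambda)=N\hat s^2-N(1-\lambda^2)\,(\bar X-\hat X)^2 .
\]
Applying $\langle\,\cdot\,\rangle$ and invoking Theorem \ref{th:mvue}, which gives $\langle\hat s^2\rangle=s^2$, reduces the entire problem to computing the single quantity $\langle(\bar X-\hat X)^2\rangle$; note that $\bar X$ is permutation-invariant, so only the $\hat X$ factor is reshuffled.

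The main computational step is therefore evaluating $\langle(\bar X-\hat X)^2\rangle$. I would write $\hat X-\bar X=\sum_n d_n X_n$ with $d_n=c_n-\tfrac1N$, observe that the constraints \eqref{eq:k2} yield $\sum_n d_n=0$ and $\sum_n d_n^2=\sum_n c_n^2-\tfrac2N+\tfrac1N=\tfrac1N$, and then symmetrize the resulting quadratic form. Setting $Y_n=X_n-\mu$ (legitimate since $\bar X-\hat X$ is shift-invariant), I would use the general fact that symmetrizing $\sum_{n,m} a_{nm}Y_nY_m$ replaces it by $\tfrac1N\big(\sum_n a_{nn}\big)\sum_n Y_n^2+\tfrac{1}{N(N-1)}\big(\sum_{n\neq m} a_{nm}\big)\sum_{n\neq m} Y_nY_m$. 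Since here $\sum_n a_{nn}=\sum_n d_n^2=\tfrac1N$ and $\sum_{n\neq m} a_{nm}=\big(\sum_n d_n\big)^2-\sum_n d_n^2=-\tfrac1N$, I expect this to collapse to exactly $\langle(\bar X-\hat X)^2\rangle=s^2/N$.

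Combining the pieces then gives $\langle S(\lambda)\rangle=Ns^2-N(1-\lambda^2)\cdot s^2/N=(N-1+\lambda^2)s^2$, whence $\langle s_\lambda^2\rangle=\langle S(\lambda)\rangle/(N-1+\lambda^2)=s^2$ for every $\lambda$. The only genuine obstacle is the bookkeeping in the symmetrization of $(\bar X-\hat X)^2$: the value $\sum_n d_n^2=\tfrac1N$ is precisely what makes the $\lambda$-dependence cancel against the $N-1+\lambda^2$ in the denominator, so I would verify that coefficient with care.
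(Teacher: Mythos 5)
Your proposal is correct and follows essentially the same route as the paper's proof: the same decomposition of $S(\lambda)$ into the AAUV term (handled by Theorem \ref{th:mvue}) plus the squared-sum term, whose symmetrization you compute to be $Ns^2$ (equivalently $\langle(\bar X-\hat X)^2\rangle = s^2/N$), exactly as the paper does. Your reparametrization $d_n = c_n - \tfrac{1}{N}$ is just a rescaling of the paper's coefficients $1 - Nc_n$, and your key values $\sum_n d_n = 0$, $\sum_n d_n^2 = \tfrac{1}{N}$ reproduce its computation, so the two arguments coincide up to bookkeeping.
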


\begin{proof}
We can write
\begin{equation}
    s_\lambda^2 = \frac{1}{N-1+\lambda^2} \left( \sum_{n=1}^N (X_n - \hat{X})^2 - \frac{1-\lambda^2}{N} \left( \sum_{n=1}^N (X_n - \hat{X}) \right)^2 \right).
\end{equation}
The first term in parentheses has already been examined in the proof of Theorem \ref{th:mvue} (see Equation \eqref{eq:M}). For the square of the sum in the second term, we have
\begin{align}
    & \left( \sum_{n=1}^N (X_n - \hat{X}) \right)^2
    = \left( \sum_{n=1}^N (1 - N c_n) (X_n - \mu) \right)^2 \notag \\
    & = \sum_{n=1}^N (1 - N c_n)^2 (X_n - \mu)^2 + \sum_{n=1}^N \sum_{m \neq n} (1 - N c_n) (1 - N c_m) (X_n - \mu) (X_m - \mu).
\end{align}
When symmetrized, this becomes
\begin{align}
    & \frac{1}{N} \sum_{k=1}^N (1 - 2N c_k + N^2 c_k^2) \sum_{n=1}^N (X_n - \mu)^2 \notag \\
    & \qquad \qquad + \frac{1}{N(N-1)} \sum_{k=1}^N \sum_{l \neq k} (1 - N (c_k + c_l) + N^2 c_k c_l) \sum_{n=1}^N \sum_{m \neq n} (X_n - \mu) (X_m - \mu) \notag \\
    & = \sum_{n=1}^N (X_n - \mu)^2 \notag \\
    & \qquad + \frac{1}{N(N-1)} \left(N(N-1) - 2N (N-1) + N(N-2)\right) \sum_{n=1}^N \sum_{m \neq n} (X_n - \mu) (X_m - \mu) \notag \\
    & = \sum_{n=1}^N (X_n - \mu)^2 - \frac{1}{N-1} \sum_{n=1}^N \sum_{m \neq n} (X_n - \mu) (X_m - \mu).
\end{align}
Thus,
\begin{align}
    & Q(\lambda) = \frac{1}{N-1+\lambda^2} \left( \sum_{n=1}^N (X_n - \mu)^2 
    - \frac{1}{N-1} \sum_{n=1}^N \sum_{m \neq n} (X_n - \mu) (X_m - \mu) \right. \notag \\
    & \qquad \qquad
    \left. - \frac{1-\lambda^2}{N} \left(
    \sum_{n=1}^N (X_n - \mu)^2 - \frac{1}{N-1} \sum_{n=1}^N \sum_{m \neq n} (X_n - \mu) (X_m - \mu) 
    \right) \right) \notag \\
    & = \frac{1}{N} \sum_{n=1}^N (X_n - \mu)^2
    - \frac{1}{N(N-1)} \sum_{n=1}^N \sum_{m \neq n} (X_n - \mu) (X_m - \mu)
    = s^2,
\end{align}
which completes the proof.
\end{proof}

\section{Higher-Order Central Moments}

Having considered average-adjusted unbiased estimators for variance, a natural question is whether the same methodology can be extended to higher-order central moments. Specifically, we ask whether the $k$th central moment
\begin{equation}
    \mu_k = \mathbb{E}\left[ \left( X - \mu \right)^k \right]
\end{equation}
can be estimated unbiasedly by
\begin{equation}
    \hat{\mu}_k = \frac{1}{N} \sum_{n=1}^N \left( X_n - \hat{X} \right)^k.
\end{equation}

Let us consider the case $k=3$. Using a similar algebraic manipulation as in Equation \eqref{eq:hatmu2}, we obtain
\begin{align}
    & \hat{\mu}_3 = \frac{1}{N} \sum_{n=1}^N \left( (1 - c_n) (X_n - \mu) - \sum_{m \neq n} c_m (X_m - \mu) - \left( 1 - \sum_{m=1}^N c_m \right) \mu \right)^3 \notag \\
    & = \frac{1}{N} \sum_{n=1}^N \left( (1 - c_n) (X_n - \mu) - \sum_{m \neq n} c_m (X_m - \mu) \right)^3
    + \left( 1 - \sum_{n=1}^N c_n \right) \mu C_\mu \notag \\
    & = \frac{1}{N} \sum_{n=1}^N \left( (1 - c_n) (X_n - \mu) - \sum_{m \neq n} c_m (X_m - \mu) \right)^3 
    + \left( 1 - \sum_{n=1}^N c_n \right) \mu C_\mu,
\end{align}
where $C_\mu$ is a second-order polynomial of $\mu, X_1, ..., X_N$.
Noting that $\mathbb{E}[(X_n-\mu)(X_m-\mu)(X_l-\mu)] = 0$ unless $n=m=l$, its expectation is
\begin{align}
    & \mathbb{E}\left[ \hat{\mu}_3  \right]
    = \frac{1}{N} \sum_{n=1}^N \left( (1 - c_n)^3 \mu_3 - \sum_{m \neq n} c_m^3 \mu_3 \right) 
    - \left( 1 - \sum_{n=1}^N c_n \right) \mu \mathbb{E}\left[ C_\mu \right] \notag \\
    & = \left( 1 - \frac{3}{N} \sum_{n=1}^N c_n + \frac{3}{N} \sum_{n=1}^N c_n^2 - \sum_{n=1}^N c_n^3 \right) \mu_3
    - \left( 1 - \sum_{n=1}^N c_n \right) \mu \mathbb{E}\left[ C_\mu \right].
\end{align}
Hence, we obtain the conditions
\begin{equation}
    \left\{
        \begin{aligned}
        & 1 - \sum_{n=1}^N c_n = 0, \\
        & 1 - \frac{3}{N} \sum_{n=1}^N c_n + \frac{3}{N} \sum_{n=1}^N c_n^2 - \sum_{n=1}^N c_n^3 = 1,
        \end{aligned}
    \right.
\end{equation}
which simplify to
\begin{equation}
    \label{eq:k3}
    \left\{
        \begin{aligned}
        & \sum_{n=1}^N c_n = 1, \\
        & \frac{3}{N} \sum_{n=1}^N c_n^2 - \sum_{n=1}^N c_n^3 = \frac{3}{N}.
        \end{aligned}
    \right.
\end{equation}

We now look for one of the solutions that satisfies Equation \eqref{eq:k3}. Suppose $N$ can be written as $N = 2M + K$ for some positive integers $M$ and $K$, and assume
\begin{equation}
    \left\{
        \begin{aligned}
            & c_1 = \cdots = c_M = \alpha, \\
            & c_{M+1} = \cdots = c_{2M} = -\alpha, \\
            & c_{2M+1} = \cdots = c_N = \beta.
        \end{aligned}
    \right.
\end{equation}
It follows immediately that
\begin{equation}
    \beta = \frac{1}{K}.
\end{equation}
Moreover,
\begin{equation}
    \frac{3}{N} \sum_{n=1}^N c_n^2 - \sum_{n=1}^N c_n^3
    = \frac{6M}{N} \alpha^2 + \frac{3K}{N} \beta^2 - K \beta^3 
    = \frac{6M}{N} \alpha^2 + \frac{3K-N}{NK^2}.
\end{equation}
Thus,
\begin{align}
    & \frac{6M}{N} \alpha^2 + \frac{3K-N}{NK^2} =  \frac{3}{N}, \notag \\
    \Leftrightarrow \quad & 6MK^2 \alpha^2 = 3K^2 - 3K + N, \notag \\
    \Leftrightarrow \quad & \alpha = \sqrt{\frac{3K(K - 1) + N}{3(N-K)K^2}}.
\end{align}
This provides one solution of Equation \eqref{eq:k3}; other coefficient sets may also yield an unbiased estimator $\hat{\mu}_3$.

Here, the conditions for the third moment become relatively simple because the cross-terms in $\mathbb{E}[(X_n-\mu)(X_m-\mu)(X_l-\mu)]$ vanish unless $n=m=l$. In forth- or higher-order moments, similar expansions would introduce more complex constraints, and it remains an open question whether coefficients $c_1,\dots,c_N$ exist for every $k$ such that $\hat{\mu}_k$ is unbiased for $\mu_k$. Even if they do exist, such an estimator might not be optimal in practice, as its inherent asymmetry suggests it may have higher variance than conventional unbiased estimators.

\section{Concluding Remarks}

In this paper, we investigated the possibility of creating unbiased variance estimators by adjusting the mean estimator rather than the more familiar approach of correcting the denominator. The half-sample approach illustrated that using only part of the data to estimate the mean could still yield an unbiased variance estimator when dividing by $N$. Generalizing this idea led us to introduce average-adjusted unbiased variance (AAUV), defined by specific linear combinations of the sample that satisfy simple conditions. By employing an interpolation approach, it is also possible to construct unbiased estimators that combine both average-adjusting and denominator-adjusting. Although one can construct a variety of unbiased variance estimators in this manner, their inherent asymmetry means they cannot outperform the standard unbiased variance in terms of lower variance. While the average-adjusting approach could be extended to unbiased estimation of third- or higher-order central moments, investigating whether such solutions exist remains a topic for future research.

\section*{Disclosure statement}

The author declare no conflict of interest.

\section*{Funding}

The author declare no funds, grants, or other support were received during the preparation of this manuscript.

\section*{Acknowledgements}

ChatGPT was used for the purpose of correcting grammatical errors and improving language clarity in this manuscript.

\bigskip

% \bibliographystyle{unsrt}
% \bibliography{ref}

\end{document}